\newtheorem{theorem}                   {Theorem}
\newtheorem{thm}             [theorem] {Theorem}
\newtheorem{lemma}           [theorem] {Lemma}
\newtheorem{claim}           [theorem] {Claim}
\def\dcup{\,\dot\cup\,}
\def\ex{\textup{ex}}
\begin{document}

\title{Multicoloured Ramsey numbers\\[0ex]
of the path of length four\\[1ex]}
\author{Henry Liu%
\thanks{School of Mathematics, Sun Yat-sen University, Guangzhou 510275, China. Email: {\tt  liaozhx5@mail.sysu.edu.cn} (corresponding author)}
\and Bojan Mohar%
\thanks{Department of Mathematics, Simon Fraser University, Burnaby, BC, V5A 1S6, Canada. On leave from IMFM, Department of Mathematics, University of Ljubljana, 1000 Ljubljana, Slovenia. Email: {\tt mohar@sfu.ca}}
\and Yongtang Shi%
\thanks{Center for Combinatorics and LPMC, Nankai University, Tianjin 300071, China. Email: {\tt shi@nankai.edu.cn}}\\[1ex]
}
\date{14 August 2021}
\maketitle

\begin{abstract}
Let $P_t$ denote the path on $t$ vertices. The \emph{$r$-coloured Ramsey number} of $P_t$, denoted by\, $R_r(P_t)$,\, is\, the\, minimum\, integer\, $n$\, such\, that\, whenever\, the\, complete\, graph\, on\, $n$ vertices is given an $r$-edge-colouring, there exists a monochromatic copy of $P_t$. In this note, we determine $R_r(P_5)$, which is approximately $3r$.\\

\noindent\textbf{AMS Subject Classification (2020):} 05B05; 05C15; 05C55; 05D10\\

\noindent{\bf Keywords:} Ramsey number; Tur\'an function; covering design; packing design; balanced incomplete block design

\end{abstract}

\section{Introduction}

All graphs in this paper are finite, undirected, and have no multiple edges or loops. For any undefined terms in graph theory, we refer to the book by Bollob\'as \cite{Bol98}.

Let $K_t,P_t$ and $C_t$ denote the complete graph (or clique), 
path and cycle on $t$ vertices. For graphs $G$ and $H$, we denote the graph which is the disjoint union of a copy of $G$ and a copy of $H$ by $G\dcup H$, and the graph with $a$ disjoint copies of $G$ by $aG$. The \emph{graph union} of $G$ and $H$ is $G\cup H=(V(G)\cup V(H), E(G)\cup E(H))$. 

An \emph{$r$-edge-colouring} of a graph $G$, or \emph{$r$-colouring} for simplicity, is a function $f:E(G)\to\{1,\dots,r\}$. The members of the set $\{1,\dots,r\}$ can be thought of as a set of $r$ colours. 
The sets $f^{-1}(i)$ for $1\le i\le r$ are the \emph{colour classes} of the $r$-colouring $f$.

Given graphs $H_1,\dots,H_r$, the \emph{$r$-coloured Ramsey number} $R(H_1,\dots,H_r)$ is the minimum integer $n$ such that, whenever we have an $r$-edge-colouring of $K_n$, there exists a monochromatic copy of $H_i$, for some $i$. When $H_1=\cdots=H_r=H$, we write $R_r(H)$ for $R(H,\dots,H)$. Ramsey's classical result \cite{Ram30} states that all Ramsey numbers $R(H_1,\dots,H_r)$ exist. When all the $H_i$ are cliques, determining the Ramsey numbers exactly is a notoriously challenging problem, and only a few values of $R(K_s,K_t)$, as well as $R(K_3,K_3,K_3)$, are known. Another well studied case is when all the $H_i$ are cycles, and in this case, the values of $R(C_s,C_t)$ are completely determined, while there are many interesting results and open questions for $R(C_q,C_s,C_t)$. For more information about Ramsey numbers, we refer the reader to the survey paper of Radziszowski \cite{Rad21}.

For the case when the $H_i$ are paths, Gerencs\'er and Gy\'arf\'as \cite{GG67} proved that $R(P_s,P_t)=t+\lfloor\frac{s}{2}\rfloor-1$ for $t\ge s\ge 2$. For three colours, Faudree and Schelp \cite{FS75} conjectured that $R(P_t,P_t,P_t)=2t-2+(t$ mod $2)$ for all $t$. This conjecture has been verified for all sufficiently large $t$ by Gy\'arf\'as et al.~\cite{GRSS07}. There are many known exact values for $R(P_q,P_s,P_t)$ when $q$ and $s$ are small. For more colours, since it is well known that the edge-chromatic number of $K_n$ is $n-1+(n$ mod $2)$, we have $R_r(P_3)=r+1+(r$ mod $2)$. For $P_4$, we have
\[
R_r(P_4)=
\left\{
\begin{array}{l@{\quad\quad}l}
2r+1 & \textup{if }r\equiv 0,2\textup{ (mod 3), }r\neq 3,\\[1ex]
2r+2 & \textup{if }r\equiv 1\textup{ (mod 3)},\\[1ex]
6 & \textup{if }r=3.
\end{array}
\right.
\]

The cases $r\equiv 1,2$ (mod 3) were proved by Irving \cite{Irv74}, and he also remarked that $R_3(P_4)=6$. The case $r\equiv 0$ (mod 3) with $r$ not a power of $3$ was proved by Lindstr\"om \cite{Lin83}. The case where $r$ is a power of $3$ was proved by Bierbrauer \cite{Bie86}.

Here, we shall determine the Ramsey numbers $R_r(P_5)$ exactly, as follows.

\begin{thm}\label{RrP5thm}
Let $r\ge 1$. Then 
\[
R_r(P_5)=
\left\{
\begin{array}{l@{\quad\quad}l}
3r+1 & \textup{\emph{if} }r\equiv 0\textup{ (mod 4)\emph{,} $r\neq 4$\emph{,}}\\[1ex]
3r+2 & \textup{\emph{if} }r\equiv 1\textup{ (mod 4)\emph{,}}\\[1ex]
3r & \textup{\emph{if} }r\equiv 2,3\textup{ (mod 4)\emph{,}}\\[1ex]
11 & \textup{\emph{if} }r=4.
\end{array}
\right.
\]
\end{thm}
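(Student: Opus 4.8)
The plan is to establish matching lower and upper bounds separately for each residue of $r$ modulo $4$, after reformulating the problem: an $r$-colouring of $K_n$ with no monochromatic $P_5$ is the same as a partition of $E(K_n)$ into $r$ spanning subgraphs, each $P_5$-free. The key analytic input is the exact form of the Erd\H{o}s--Gallai theorem, namely that a $P_5$-free graph on $n$ vertices has at most $\ex(n,P_5)=6\lfloor n/4\rfloor+\binom{s}{2}$ edges, where $s\equiv n\pmod 4$ with $0\le s\le 3$, the extremal graphs being vertex-disjoint unions of cliques; in particular, for $n\equiv 1\pmod 4$ the unique extremal graph is $\lfloor n/4\rfloor$ copies of $K_4$ together with one isolated vertex. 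I will use the edge bound for the upper bounds and the rigidity of the extremal configuration for the tight cases.

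For the upper bounds I would first apply the global count $\binom{n}{2}\le r\cdot\ex(n,P_5)$, which must hold if a good colouring exists. Substituting the exact Turán value (the weaker bound $\lfloor 3n/2\rfloor$ is not sharp enough for the cases $r\equiv 2,3\pmod 4$), the inequality is violated---and hence a monochromatic $P_5$ is forced---already at $n=3r+2$ when $r\equiv 1$, at $n=3r$ when $r\equiv 2$, and at $n=3r+1$ when $r\equiv 0$. The subtle case is $r\equiv 3\pmod 4$: here the count gives exact equality at $n=3r$, so in any good colouring every colour class must be extremal, and by the uniqueness clause each class is a disjoint union of $K_4$'s together with one isolated vertex. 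Consequently the colour classes edge-decompose $K_{3r}$ into copies of $K_4$, which forces $3\mid(3r-1)$; since $3r-1\equiv 2\pmod 3$ this is impossible, giving $R_r(P_5)\le 3r$.

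For the lower bounds I would exhibit explicit $P_5$-free $r$-colourings of $K_{R-1}$ built from resolvable designs. The cleanest case is $r\equiv 1\pmod 4$, where $3r+1\equiv 4\pmod{12}$: a resolvable Steiner system $S(2,4,3r+1)$ exists, and its $r$ parallel classes are precisely $r$ vertex-disjoint $K_4$-factors, colouring $K_{3r+1}$ with no monochromatic $P_5$. For the other residues the relevant orders are not $\equiv 4\pmod{12}$, so no perfect resolvable Steiner system is available, and I would instead start from a near-optimal resolvable $K_4$-packing and absorb the handful of leftover edges into a controlled number of triangles and single edges, grouping all the cliques into $r$ classes of pairwise-disjoint cliques (each class being automatically $P_5$-free). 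For instance, when $r\equiv 0$ one can delete a point from an $S(2,4,3r+1)$ to decompose $K_{3r}$ into $K_4$'s and a triangle-factor and then resolve this into $r$ parallel classes; the slack of $\tfrac{3r}{2}$ edges in the count is exactly what the triangles absorb.

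I expect the main obstacle to be the lower-bound constructions: proving, for every sufficiently large $r$ in each residue class, that the required resolvable packing or frame actually exists (equivalently, that the modified design resolves into $r$ disjoint-clique classes), and then isolating the finitely many small orders where it does not. The exceptional value $R_4(P_5)=11$ is precisely such a failure at $n=12$, where the frame of the relevant type does not exist; both the non-$4$-colourability of $K_{11}$ and the $4$-colourability of $K_{10}$ will need a dedicated argument---phrased via covering and packing designs on few points---rather than the general machinery.
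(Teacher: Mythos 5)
Your upper-bound half is essentially the paper's argument and is correct as it stands: the pigeonhole/global count $\binom{n}{2}\le r\cdot\ex(n,P_5)$ with the exact Faudree--Schelp value (your attribution to Erd\H{o}s--Gallai is off, since you need the exact value \emph{and} the uniqueness of the extremal graph, which is due to Faudree and Schelp) kills the cases $r\equiv 0,1,2 \pmod 4$ at the claimed orders, and your rigidity argument for $r\equiv 3$ --- every colour class is $aK_4\dcup K_1$, so every vertex degree is a multiple of $3$, contradicting $3\nmid 3r-1$ --- is a valid repackaging of the paper's count at the isolated vertex of one colour class.

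The genuine gaps are all in the other half. First, your lower-bound constructions for $r\equiv 0,2,3\pmod 4$ are not proofs: you yourself flag the existence of the required resolutions as ``the main obstacle,'' and it really is one. Deleting a point from an $S(2,4,3r+1)$ gives the right edge count for $r\equiv 0$, but there is no reason the resulting $K_4$'s and triangles resolve into $r$ parallel classes; the paper instead \emph{cites} the existence of resolvable optimal $(2,4,v)$-covering designs for $v\equiv 0,8\pmod{12}$ (Lamken--Mills--Rees, Abel et al.) and, for a finite list of orders excluded there, falls back on resolvable optimal packings with leave $\frac{v}{3}K_3$ or $\frac{v}{2}K_2$ (Ge et al.). Without invoking such results (or constructing the designs yourself), the lower bounds for $r\equiv 0,3$ are unproved. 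You also never address the lower bound for $r\equiv 2$ at all; the paper gets it for free from monotonicity, $R_r(P_5)\ge R_{r-1}(P_5)+1$ applied to $r-1\equiv 1\pmod 4$. Second, the case $r=4$ is only described, not proved: the global count gives merely $R_4(P_5)\le 13$ at $n=11$ (since $55<4\cdot 15$), so a structural argument is unavoidable --- the paper classifies the $P_5$-free graphs on $11$ vertices with $14$ edges as $K_4\dcup K_4\dcup P_3$ or $K_4\dcup K_4^-\dcup K_3$ and derives a contradiction from the forced $K_4$'s in three colour classes --- and the matching $4$-colouring of $K_{10}$ has to be exhibited explicitly. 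These are not routine details to be deferred; they are the parts of the theorem that the general machinery does not reach.
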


We shall proceed as follows. In Section \ref{toolssec}, we gather various results from Ramsey theory, Tur\'an theory, and design theory. In Section \ref{thm1sec}, we prove Theorem \ref{RrP5thm}.


\section{Tools}\label{toolssec}

We first observe that for $r\ge 2$ and $t\ge 4$, we have
\begin{equation}
R_{r-1}(P_t)<R_r(P_t).\label{Rincr}
\end{equation}
Indeed, both terms in (\ref{Rincr}) exist. Let $R= R_{r-1}(P_t)$. Then, there exists an $(r-1)$-colouring of $K_{R-1}$ which does not contain a monochromatic copy of $P_t$. By adding new vertex $u$ to form $K_R$ and colouring all edges incident to $u$ with a new colour, we get an $r$-colouring of $K_R$ with no monochromatic copy of $P_t$ (since $t\ge 4$). Hence, $R_r(P_t)>R=R_{r-1}(P_t)$.

Next, we recall that for a graph $H$ and $n\in\mathbb N$, the \emph{Tur\'an function} for $H$, denoted by ex$(n,H)$, is the maximum possible number of edges in a $H$-free graph (i.e., not containing a copy of $H$ as a subgraph) on $n$ vertices. Any $H$-free graph on $n$ vertices and attaining ex$(n,H)$ edges is said to be \emph{extremal}. For any path $P_t$, the Tur\'an function ex$(n,P_t)$, as well as the corresponding extremal graphs, were completely determined by Faudree and Schelp \cite{FS75}. In order to attain ex$(n,P_t)$, we can take the graph on $n$ vertices containing as many disjoint $K_{t-1}$ cliques as possible, i.e., $\lfloor\frac{n}{t-1}\rfloor$ cliques, and a smaller clique on the remaining vertices. For odd $t$, this graph is the unique extremal graph, and for even $t\ge 4$ and certain values of $n$, there are other extremal graphs. Their result for $P_5$ is the following.

\begin{thm}\label{FSthm}\textup{\cite{FS75}}\,
Let $n=4a+b$, where $a\ge 0$ and $0\le b\le 3$. We have
\[
\textup{ex}(n, P_5)=6a+\frac{b(b-1)}{2}.
\]
Moreover, the unique extremal graph is $aK_4\dcup K_b$.
\end{thm}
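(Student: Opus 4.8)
The plan is to prove the two bounds separately. The lower bound $\textup{ex}(n,P_5)\ge 6a+\tfrac{b(b-1)}{2}$ is immediate: the graph $aK_4\dcup K_b$ has every component on at most four vertices, so it contains no $P_5$, and it has exactly $a\binom{4}{2}+\binom{b}{2}=6a+\tfrac{b(b-1)}{2}$ edges. The real content is the matching upper bound together with the uniqueness of the extremal graph, and for this I would not argue about $G$ directly but instead reduce to its connected components. If $H_1,\dots,H_c$ are the components of a $P_5$-free graph $G$, with $|V(H_i)|=m_i$, then $e(G)=\sum_i e(H_i)$ and each $H_i$ is itself $P_5$-free, so it suffices to bound $e(H_i)$ in terms of $m_i$ and then optimise over the partition $n=\sum_i m_i$.

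The heart of the argument is therefore a structural bound for a \emph{connected} $P_5$-free graph $H$ on $m$ vertices, for which I would show
\[
e(H)\le\binom{m}{2}\ \text{ if } m\le 4,\qquad e(H)\le m\ \text{ if } m\ge 5.
\]
For $m\le 4$ this is trivial. For $m\ge 5$ I would fix a longest path $P=v_1v_2v_3v_4$ in $H$ (it has at most four vertices since $H$ is $P_5$-free, and exactly four unless $H$ is a star or a triangle, cases that are easy and give $e(H)\le m-1$). Writing $A=\{v_1,v_2,v_3,v_4\}$ and $B=V(H)\setminus A$, maximality of $P$ forces every $u\in B$ to have no neighbour in $\{v_1,v_4\}$ and at most one neighbour in $\{v_2,v_3\}$; a short case check then shows that any $u\in B$ with a neighbour in $A$ has no neighbour in $B$, and connectivity propagates this to give that $B$ is independent with exactly $m-4$ edges joining it to $A$. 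The same rotations pin down which chords of $P$ survive: once $B\neq\varnothing$ the chords $v_1v_3$ and $v_1v_4$ always create a $P_5$, and $v_2v_4$ does too as soon as both $v_2$ and $v_3$ receive pendants, leaving at most one admissible chord. Hence $e(H)\le 4+(m-4)=m$. Establishing this lemma cleanly, i.e.\ running the handful of rotation arguments without missing a configuration, is the main obstacle; everything after it is bookkeeping.

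With the lemma in hand the proof assembles quickly. Let $g(m)$ denote the right-hand side above, and let $f(m)=6\lfloor m/4\rfloor+\binom{m\bmod 4}{2}$ be the claimed value of $\textup{ex}(m,P_5)$, which one checks equals the maximum of $\sum_j\binom{c_j}{2}$ over all partitions $m=\sum_j c_j$ into parts $c_j\le 4$. A direct comparison gives $g(m)=f(m)$ for $m\le 4$ and $g(m)=m<f(m)$ for $m\ge 5$, so $g(m)\le f(m)$ always, with equality only when $m\le 4$. Since $f$ is superadditive (merging the clique-partitions of $x$ and $y$ yields one of $x+y$), I obtain
\[
e(G)=\sum_i e(H_i)\le\sum_i g(m_i)\le\sum_i f(m_i)\le f(n)=6a+\tfrac{b(b-1)}{2}.
\]
For the uniqueness, equality throughout forces every $m_i\le 4$ (from $g\le f$) and $e(H_i)=\binom{m_i}{2}$, i.e.\ each component is a clique $K_{m_i}$ on at most four vertices; a final elementary optimisation — any two cliques of order at most three can be locally replaced to strictly increase $\sum_i\binom{m_i}{2}$ — shows the unique maximiser is $aK_4\dcup K_b$, completing the proof.
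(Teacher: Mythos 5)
The paper does not actually prove this statement --- it is quoted from Faudree and Schelp \cite{FS75} --- so there is no internal proof to compare against. Your argument is a legitimate self-contained proof, and its key structural step, namely classifying connected $P_5$-free graphs and showing that on $m\ge 5$ vertices such a graph has at most $m$ edges, is precisely the classification the authors themselves invoke later in the proof of Claim \ref{R4P5clm} (a component is a tree, a triangle with pendant edges at a single vertex, or one of $C_4$, $K_4^-$, $K_4$). The reduction to components, the superadditivity of $f$, and the convexity argument identifying $a$ fours and one part $b$ as the unique optimal clique partition are all sound, and your equality analysis does deliver the uniqueness of $aK_4\dcup K_b$.

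One sub-claim in your rotation analysis is false as stated: the chord $v_1v_3$ does \emph{not} always create a $P_5$ once $B\neq\varnothing$. Take the triangle $v_1v_2v_3$ with two pendant vertices $v_4$ and $u$ attached to $v_3$: here $v_1v_2v_3v_4$ is a longest path, $u\in B$, the chord $v_1v_3$ is present, and the graph is $P_5$-free. What is true is that $v_1v_3$ together with a pendant at $v_2$ forces a $P_5$ (via $uv_2v_1v_3v_4$), and symmetrically $v_2v_4$ together with a pendant at $v_3$ does, while $v_1v_4$ is forbidden by a pendant at either of $v_2,v_3$; since every vertex of $B$ is pendant at $v_2$ or at $v_3$, one still concludes that at most one chord of $P$ can survive. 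Your count $e(H)\le 4+(m-4)=m$ uses only this weaker conclusion, so the slip does not propagate, but the case analysis needs to be corrected along these lines before the lemma can be considered proved.
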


Now, we recall some notions and results from design theory. Let $v\ge k\ge t$ and $\lambda$ be positive integers. Let $V$ be a set of $v$ points, and $\mathcal B$ be a family of $k$-element subsets of $V$, called \emph{blocks}. The pair $(V,\mathcal B)$ is a  \emph{Steiner system} $S(t,k,v)$ if any $t$ points of $V$ are contained in exactly one block of $\mathcal B$. If ``exactly one'' is replaced by ``at least one'' and ``at most one'', then $(V,\mathcal B)$ is a \emph{$(t,k,v)$-covering design} and a \emph{$(t,k,v)$-packing design}, respectively. A $(t,k,v)$-covering design (resp.~$(t,k,v)$-packing design) $(V, \mathcal B)$ is \emph{optimal} if $|\mathcal B|$ attains the minimum (resp.~maximum) possible value. The pair $(V,\mathcal B)$ is a \emph{balanced incomplete block design}, or \emph{BIBD}, if any two points of $V$ are contained in exactly $\lambda$ blocks. Thus, a Steiner system $S(2,k,v)$ is precisely a BIBD with $\lambda=1$, and we denote such a design by $B(k,v)$. It is well-known that a necessary condition for the existence of a $B(k,v)$ is $v\equiv 1$ or $k$ (mod $k(k-1))$. For a $(2,k,v)$-packing design $(V,\mathcal B)$, the \emph{leave graph} $L$ is the graph with vertex set $V(L)=V$, and edge set $E(L)=\{xy: x,y\in V$ and $\{x,y\}\not\subset B$ for every $B\in\mathcal B\}$.
%

For any of the above designs, a \emph{parallel class} is a set of blocks of $\mathcal B$ that form a partition of $V$. If $\mathcal B$ can be partitioned into parallel classes, then the design is \emph{resolvable}. Clearly for a design to be resolvable, we must necessarily have $v\equiv 0$ \textup{(mod} $k)$.

Here, we are interested in designs with $k=4$, $t=2$ and $\lambda=1$. We refer the interested reader to a survey by Reid and Rosa \cite{RR10} for more information about these designs. For BIBDs, we have the following result of Hanani et al.~\cite{HRW72}

\begin{thm}\label{RBIBDthm}\textup{\cite{HRW72}}\,
For every $v\equiv 4 \textup{ (mod }12)$, there exists a resolvable BIBD $B(4,v)$. The number of parallel classes is $\frac{v-1}{3}$.
\end{thm}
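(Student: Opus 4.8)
The plan is to separate the two assertions: the count of parallel classes follows by elementary double counting, while the existence is the substantive part and must be established by recursive design constructions.

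First I would pin down the parameters. A $B(4,v)$ is a Steiner system $S(2,4,v)$, so each of the $\binom{v}{2}=\frac{v(v-1)}{2}$ pairs lies in a unique block, and each block of size $4$ accounts for $\binom{4}{2}=6$ pairs; hence the number of blocks is $b=\frac{v(v-1)}{12}$, which is an integer precisely because $v\equiv 4\pmod{12}$. In a resolvable design each parallel class partitions the $v$ points into blocks of size $4$, so it consists of $v/4$ blocks; dividing gives $b/(v/4)=\frac{v-1}{3}$ parallel classes. Equivalently, each point lies in exactly one block of each class, so the number of classes equals the replication number $r=\frac{v-1}{3}$. The congruence $v\equiv 4\pmod{12}$ also forces $v\equiv 0\pmod 4$, the necessary condition for resolvability, so the arithmetic is consistent.

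For existence, I would proceed recursively using the standard machinery of group divisible designs, resolvable transversal designs (equivalently, mutually orthogonal Latin squares), and Kirkman frames. A $\{4\}$-frame of type $g^u$ is a group divisible design with $u$ groups each of size $g$, block size $4$, and a partition of its blocks into partial parallel classes, each of which misses exactly one group and partitions the rest. The strategy is (i) to inflate small frames via Wilson's fundamental construction, using a resolvable $TD(4,m)$ (three MOLS of order $m$) to multiply the point sets while preserving the frame structure, and then (ii) to fill in each group with a resolvable $B(4,g+1)$ (adjoining a common point), thereby assembling the partial classes into full parallel classes of a resolvable $B(4,v)$ on $v=gu+1$ or $gu$ points as the construction dictates. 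Iterating these operations, anchored at a finite set of directly constructed base designs, should cover all $v\equiv 4\pmod{12}$.

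The main obstacle is twofold and lies entirely in the base of the recursion. First, one must exhibit resolvable $B(4,v)$ directly for the smallest admissible orders: $v=4$ is trivial (one block, one parallel class, and $\frac{v-1}{3}=1$), but $v=16,28,40$ and a handful of further orders require explicit designs, typically produced from a difference family or a starter--adder construction over a cyclic or abelian group. Second, the inflation step demands three MOLS of the relevant orders, and these fail for small exceptional values (such as orders $2$, $3$, and $6$), so the corresponding frames and fill-ins must be supplied by special constructions rather than the generic recursion. Verifying that these finitely many exceptional cases are resolvable, and that the recursion then propagates resolvability to every $v\equiv 4\pmod{12}$ without gaps, is where the real work resides; the remaining recursive bookkeeping is routine once the ingredient designs are in hand.
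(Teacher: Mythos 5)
The statement you are asked to prove is not proved in the paper at all: it is Theorem \ref{RBIBDthm}, quoted verbatim from Hanani, Ray-Chaudhuri and Wilson \cite{HRW72} and used as a black box in the lower-bound construction of Lemma \ref{RrP5lem2}. So there is no in-paper argument to compare against; the relevant benchmark is the original 1972 proof.

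Your first part is fine: the count of parallel classes follows from the double count $b=\frac{v(v-1)}{12}$ together with $v/4$ blocks per class (equivalently, the replication number $r=\frac{v-1}{3}$), and the congruence $v\equiv 4\pmod{12}$ makes all the divisibilities work out. But the existence assertion is the entire content of the theorem, and your treatment of it is an outline of a proof strategy rather than a proof. You correctly name the standard machinery (group divisible designs, resolvable transversal designs from three MOLS, frames, Wilson-style inflation and filling of groups), and this is indeed the shape of the known arguments. However, everything that makes the theorem true is deferred: you do not exhibit the base designs (resolvable $B(4,16)$, $B(4,28)$, $B(4,40)$, and the further small orders needed to seed the recursion), you do not specify which frames of which types are constructed and how the MOLS exceptions at orders $2$, $3$, $6$ are circumvented, and you do not verify that the recursion actually reaches every $v\equiv 4\pmod{12}$ with no residual gaps. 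You acknowledge this yourself in your final paragraph, which is candid but confirms that the substantive step is missing. As written, the proposal establishes the parallel-class count but not the existence claim; to close the gap one must either carry out the full recursive construction with explicit base cases (essentially reproducing \cite{HRW72}) or, as the paper does, cite that result.
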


%

For $(2,4,v)$-covering designs, by combining results of Lamken et al.~\cite{LMR98}, and Abel et al.~\cite{AABBG07}, we have the following result.

\begin{thm}\label{covthm}\textup{\cite{AABBG07,LMR98}}\,
For $v\equiv 0,8$\textup{ (mod }$12)$ with $v\ne 12$, there exists an optimal $(2,4,v)$-covering design which is resolvable, except possibly for $v\in\{108, 116, 132, 156, 204, 212\}$. The number of parallel classes is
\[
\left\{
\begin{array}{l@{\quad\quad}l}
\displaystyle \frac{v}{3} & \textup{\emph{if} }v\equiv 0\textup{ (mod 12)\emph{,}}\\[2ex]
\displaystyle \frac{v+1}{3} & \textup{\emph{if} }v\equiv 8\textup{ (mod 12).}
\end{array}
\right.
\]
\end{thm}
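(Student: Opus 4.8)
The plan is to combine the standard lower bound for covering numbers with recursive constructions from design theory, leaving only a short list of small orders to be settled by hand.

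First I would record the Schönheim lower bound
\[
C(v,4,2)\ge\left\lceil\frac{v}{4}\left\lceil\frac{v-1}{3}\right\rceil\right\rceil
\]
and evaluate it on the two residue classes. For $v\equiv 0\pmod{12}$ one has $\lceil(v-1)/3\rceil=v/3$, giving the bound $v^2/12$; for $v\equiv 8\pmod{12}$ one has $\lceil(v-1)/3\rceil=(v+1)/3$, giving $v(v+1)/12$. In both cases the quantity is an integer and equals $(\text{number of parallel classes})\times(v/4)$ for the parallel-class counts in the statement, so it suffices to exhibit a resolvable $(2,4,v)$-covering with exactly that many blocks: any such covering is automatically optimal. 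I would also note that resolvability forces $v\equiv 0\pmod 4$, which both classes satisfy, and that in a resolvable covering each point lies in exactly one block per parallel class. Hence each point is paired with others $3p$ times in total, where $p$ is the number of classes, and since every pair must be covered at least once the excess (multiplicity above one) at each point is $3p-(v-1)$. A direct count then shows this excess is $1$ at every point when $v\equiv 0\pmod{12}$, so the excess graph is a perfect matching, and $2$ at every point when $v\equiv 8\pmod{12}$, so it is a $2$-regular graph. This rigid "padding" is exactly what the construction must realize.

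The construction itself I would build recursively, using resolvable group divisible designs with block size $4$ (i.e.\ $\{4\}$-RGDDs), Kirkman frames, and the resolvable BIBDs $B(4,v)$ of Theorem \ref{RBIBDthm} (available for $v\equiv 4\pmod{12}$) as basic ingredients. The standard mechanism is to take a $\{4\}$-RGDD of a suitable type, fill its groups with small resolvable coverings or resolvable BIBDs, and align the parallel classes across the pieces so that the resolution of the composite inherits the resolutions of its parts; the pairs left inside the groups are then absorbed into the prescribed matching or $2$-factor of excess. Choosing the group sizes $\equiv 4\pmod{12}$ lets Theorem \ref{RBIBDthm} supply perfectly resolvable fillings, so the only source of excess is controlled and can be forced into the required $1$- or $2$-regular pattern. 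Iterating this over each residue class reduces every large $v$ to a bounded set of base orders.

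The main obstacle, as is typical for resolvable covering results, lies entirely in the base and intermediate orders: the recursion needs small resolvable coverings of particular orders as seeds, and for a few values the needed seed either does not exist or is not known, so one must fall back on direct methods (difference families over a group acting on the point set, starter–adder constructions, or computer search). This is precisely where the listed exceptions $v\in\{108,116,132,156,204,212\}$ arise — the orders for which neither the recursive reduction finds a usable ingredient nor a direct construction has been exhibited — and settling optimal resolvability there is the genuinely hard part. For all other $v\equiv 0,8\pmod{12}$ with $v\ne 12$, combining the recursive step with the verified seeds completes the argument.
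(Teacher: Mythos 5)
This statement is not proved in the paper at all: it is quoted verbatim (with the residue arithmetic adapted) from Lamken--Mills--Rees \cite{LMR98} and Abel et al.~\cite{AABBG07}, and the paper uses it as a black box. So the comparison is really against the literature you are implicitly reconstructing. Your bookkeeping is correct and matches the standard framework: the Sch\"onheim bound $\left\lceil\frac{v}{4}\left\lceil\frac{v-1}{3}\right\rceil\right\rceil$ does evaluate to $\frac{v}{3}\cdot\frac{v}{4}$ and $\frac{v+1}{3}\cdot\frac{v}{4}$ on the two residue classes, any resolvable covering meeting it is automatically optimal, and the excess graph of such a covering is indeed $1$-regular (a perfect matching) for $v\equiv 0 \pmod{12}$ and $2$-regular for $v\equiv 8 \pmod{12}$.

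The genuine gap is in your construction paragraph, and it is not merely an omission of detail: as stated, the mechanism is self-contradictory. If you take a $\{4\}$-RGDD and fill every group of size $g\equiv 4\pmod{12}$ with the resolvable $B(4,g)$ of Theorem \ref{RBIBDthm}, then every within-group pair is covered exactly once and every cross pair exactly once, so the composite is a resolvable $B(4,v)$ with \emph{zero} excess --- which is impossible for $v\equiv 0,8\pmod{12}$, since a resolvable $B(4,v)$ forces $v\equiv 4\pmod{12}$. There are no ``pairs left inside the groups'' to absorb, and the perfect-matching or $2$-regular excess your own counting argument proved mandatory never materializes. The real proofs must inject the excess deliberately: they fill groups with optimal resolvable \emph{coverings} of smaller orders in the same residue classes (so the recursion needs seeds of the very type being constructed), use Kirkman frames, near-resolvable designs, and adding-point constructions, and must also arrange that all groups contribute equal numbers of parallel classes so the resolutions can be merged --- none of which follows from the sketch. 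Moreover, the bulk of \cite{LMR98} and \cite{AABBG07} consists of explicit direct constructions for a long list of base orders; the six exceptions $\{108,116,132,156,204,212\}$ are not derivable from the recursion but are simply the orders those papers could not settle, so they can only be inherited by citation. In short: your lower-bound and excess analysis is sound, but the existence half of the theorem is asserted rather than proved, and the one concrete construction you propose provably cannot work.
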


For $(2,4,v)$-packing designs, the following result was proved by Ge et al.~\cite{GLLS05}. Several cases of the result were proved by various authors. See \cite{GLLS05} for the references therein.

\begin{thm}\label{RGDDthm}\textup{\cite{GLLS05}}\,
\begin{enumerate}
\item[(a)] For $v\equiv 0$\textup{ (mod }$12)$ with $v\neq 12$, there exists an optimal $(2,4,v)$-packing design which is resolvable. The number of parallel classes is $\frac{v-3}{3}$, and the leave graph is $L=\frac{v}{3}K_3$.
\item[(b)] For $v\equiv 8$\textup{ (mod }$12)$ with $v\neq 8,20$, there exists an optimal $(2,4,v)$-packing design which is resolvable, except possibly for $v\in\{68, 92, 104, 140, 164, 188, 200, 236,260, 284, 356,$ $368, 404, 428, 476, 500,668, 692\}$. The number of parallel classes is $\frac{v-2}{3}$, and the leave graph is $L=\frac{v}{2}K_2$.
\end{enumerate}
\end{thm}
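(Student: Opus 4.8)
The plan is to restate the theorem in the language of group divisible designs, where both parts become existence statements for resolvable $4$-GDDs, and then to follow the standard recursive machinery of design theory. Observe first that an optimal resolvable $(2,4,v)$-packing with the prescribed leave is the same object as a resolvable $4$-GDD. Indeed, in part (a) the leave $L=\frac{v}{3}K_3$ consists of $\frac{v}{3}$ vertex-disjoint triangles; declaring each triangle to be a group of size $3$ turns the uncovered pairs into exactly the within-group pairs, so the blocks cover every cross-group pair exactly once, i.e.\ they form a $4$-GDD of type $3^{v/3}$, and a parallel class of the packing is a parallel class of the GDD. Conversely any resolvable $4$-GDD of type $3^{v/3}$ yields such a packing. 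The identical reduction with groups of size $2$ identifies part (b) with the existence of a resolvable $4$-GDD of type $2^{v/2}$.

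Before constructing these designs I would pin down the parameters and verify optimality by a counting argument, which is the easy direction. In any $(2,4,v)$-packing each point lies in at most $\lfloor\frac{v-1}{3}\rfloor$ blocks, since each block through a point uses $3$ of the at most $v-1$ other points. For $v\equiv 0$ (mod $12$) we have $v-1\equiv 2$ (mod $3$), so each point is in at most $\frac{v-3}{3}$ blocks and its leave-degree is at least $2$; hence $|E(L)|\ge v$, with equality precisely when $L=\frac{v}{3}K_3$. For $v\equiv 8$ (mod $12$) we have $v-1\equiv 1$ (mod $3$), the bound becomes $\frac{v-2}{3}$ blocks, the leave-degree is at least $1$, and $|E(L)|\ge\frac{v}{2}$ with equality precisely when $L$ is a perfect matching. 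This shows that the stated leaves minimise $|E(L)|$, so a design with those leaves is automatically optimal, and a routine count then gives the stated number $\frac{v-3}{3}$ (resp.\ $\frac{v-2}{3}$) of parallel classes.

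The substantive part is the existence of the resolvable designs, which I would obtain by the frame-and-fill recursion standard for this area. The engine is Wilson's Fundamental Construction: starting from a master GDD, one assigns weights to the points and replaces each block by an ingredient design on the weighted points, the key ingredients being transversal designs $TD(k,n)$ (equivalently sets of $k-2$ mutually orthogonal Latin squares) and $4$-frames of small type. This produces a $4$-frame of the desired type $g^{u}$; one then fills each group of the frame, adjoining a small resolvable ingredient, to complete the partial parallel classes into genuine parallel classes and obtain the full $4$-RGDD. The two residue classes $v\equiv 0,8$ (mod $12$) correspond to group sizes $3$ and $2$, and the divisibility conditions on $v$ are exactly those making the relevant frames and transversal designs available. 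A finite set of base cases (small $v$ in each class) must be built directly, typically from difference families over cyclic or other groups, or by computer search, to seed the recursion.

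The main obstacle — and the reason the result is stated with a list of possible exceptions — is closing up the recursion at small orders. The recursive constructions demand ingredient designs (transversal designs with enough orthogonal structure, frames of particular types) whose own existence has gaps for small parameters, so a number of sporadic $v$ cannot be reached by the generic machinery and must be settled individually. For type $3^{v/3}$ this can be carried out completely; only $v=12$ is genuinely excluded, since a $4$-RGDD of type $3^{4}$ would be equivalent to three mutually orthogonal Latin squares of order $3$, which do not exist. For type $2^{v/2}$ a handful of orders near the bottom of the range resist all current constructions, leaving the undecided set $\{68,92,104,\dots,692\}$ recorded in part (b), together with the base-case failures at $v=8,20$. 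I would not attempt to resolve these here; following \cite{GLLS05}, I would assemble the partial results of the earlier authors cited therein for the accessible cases and carry the residual sporadic orders as the stated possible exceptions.
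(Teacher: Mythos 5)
Your proposal cannot be checked against a proof in the paper, because the paper gives none: Theorem \ref{RGDDthm} is imported verbatim from \cite{GLLS05} (which in turn collects cases due to several authors), which is in effect what you also end up doing. Your framing is sound and is indeed the standard one in that literature: identifying a resolvable optimal $(2,4,v)$-packing with leave $\frac{v}{3}K_3$ (resp.\ $\frac{v}{2}K_2$) with a resolvable $4$-GDD of type $3^{v/3}$ (resp.\ $2^{v/2}$), reading off the number of parallel classes as the replication number $\frac{v-3}{3}$ (resp.\ $\frac{v-2}{3}$), and your explanation of the exclusion $v=12$ via the nonexistence of three mutually orthogonal Latin squares of order $3$ is correct. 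But the substantive content of the theorem --- that the required resolvable GDDs exist for \emph{every} admissible $v$ outside the listed exceptions --- is exactly what your text does not establish: ``obtain by the frame-and-fill recursion,'' ``a finite set of base cases must be built directly,'' ``assemble the partial results of the earlier authors.'' The entire difficulty of \cite{GLLS05} lies in supplying those ingredients (explicit frames and transversal designs, starters or difference families for the seed orders, and the case analysis that whittles the undecided set for type $2^{v/2}$ down to the stated $18$ values), so what you have written is a correct road map to the cited proof rather than a proof.

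One local slip is worth fixing. In case (a) your counting argument shows each point has leave-degree at least $2$, hence $|E(L)|\ge v$, but equality forces only that $L$ is $2$-regular, i.e.\ a disjoint union of cycles --- not that $L=\frac{v}{3}K_3$. The triangle leave is what the GDD reformulation produces (there the uncovered pairs are exactly the within-group pairs), not something forced by the edge count, so your ``with equality precisely when $L=\frac{v}{3}K_3$'' is wrong as stated. Since you only need the implication ``leave $\frac{v}{3}K_3$ implies the packing is optimal,'' which the bound does give, this does not derail the argument; note that in case (b) the analogous ``precisely'' is fine, since a $1$-regular graph on $v$ vertices is necessarily a perfect matching.
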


%


\section{Ramsey numbers of the path of length four}\label{thm1sec}

In this section, we prove Theorem \ref{RrP5thm}. First, we prove the required upper bound for $R_r(P_5)$, when $r\neq 4$.

\begin{lemma}\label{RrP5lem1}
For $r\ge 1$, we have
\[
R_r(P_5)\le\left\{
\begin{array}{l@{\quad\quad}l}
\displaystyle 3r+1 & \textup{\emph{if} }r\equiv 0\textup{ (mod 4)\emph{,}}\\[1ex]
\displaystyle 3r+2 & \textup{\emph{if} }r\equiv 1\textup{ (mod 4)\emph{,}}\\[1ex]
\displaystyle 3r & \textup{\emph{if} }r\equiv 2,3\textup{ (mod 4).}
\end{array}
\right.
\]
\end{lemma}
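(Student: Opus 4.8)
The plan is to bound each colour class by the Tur\'an function and compare the total with $\binom n2$. Suppose, for contradiction, that $K_n$ admits an $r$-colouring with no monochromatic $P_5$. Each of the $r$ colour classes is then a $P_5$-free graph on $n$ vertices, so by Theorem \ref{FSthm} it has at most $\ex(n,P_5)$ edges; since the colour classes partition $E(K_n)$, this yields
\[
\binom n2\le r\cdot\ex(n,P_5).
\]
The whole argument rests on showing that, at $n$ equal to the claimed bound in each residue class, this inequality either fails outright or can be forced into a contradiction.

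For $r\equiv 0,1,2\pmod 4$ I would simply verify that the inequality fails. Writing $r=4k+i$ and taking $n$ to be the claimed value, I substitute $n=4a+b$ with the appropriate $b\in\{1,2\}$ into $\ex(n,P_5)=6a+\binom b2$ and compute $\binom n2-r\cdot\ex(n,P_5)$. One finds this difference to be $6k$ for $r\equiv0$ (at $n=3r+1$), $12k+4$ for $r\equiv1$ (at $n=3r+2$), and $2k+1$ for $r\equiv2$ (at $n=3r$); all three are strictly positive. Each strictly contradicts the displayed inequality, so no $P_5$-free $r$-colouring of $K_n$ exists, giving the stated bound in these three cases.

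The hard case will be $r\equiv3\pmod 4$. Here the same computation at $n=3r=12k+9$ yields $a=3k+2$, $b=1$ and $\binom n2=r\cdot\ex(n,P_5)$ \emph{with equality}, so pure edge-counting is insufficient --- this is the main obstacle. To get past it, I would exploit that equality in the displayed bound forces every colour class to attain $\ex(n,P_5)$ edges. By the uniqueness clause of Theorem \ref{FSthm}, each colour class is then isomorphic to $(3k+2)K_4\dcup K_1$, that is, a disjoint union of $K_4$'s together with a single isolated vertex. Consequently, in each colour every vertex has monochromatic degree $3$ (inside a $K_4$) or $0$ (when it is the lone isolated vertex).

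Finally I would finish with a degree count at a single vertex. Fix $v$ and let $s_v$ be the number of colours in which $v$ is isolated; summing its degrees over all colours gives $\deg_{K_n}(v)=3(r-s_v)$. Since $\deg_{K_n}(v)=n-1=3r-1$, this forces $3s_v=1$, which is impossible for an integer $s_v$. This contradiction shows that no $P_5$-free $r$-colouring of $K_{3r}$ exists when $r\equiv3\pmod4$, whence $R_r(P_5)\le 3r$ in that case, completing the proof of the lemma.
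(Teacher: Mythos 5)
Your proposal is correct and follows essentially the same route as the paper: bound each colour class by $\ex(n,P_5)$ via Theorem \ref{FSthm}, observe that the count is strict except when $r\equiv 3\pmod 4$ at $n=3r$, and in that tight case use the uniqueness of the extremal graph $aK_4\dcup K_1$ together with a degree argument (every monochromatic degree is $0$ or $3$, yet $n-1=3r-1$ is not divisible by $3$). The paper phrases the counting via the most frequent colour and anchors the degree argument at the isolated vertex of one fixed colour, but these are cosmetic differences; your computations in all four residue classes check out.
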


\begin{proof}
Let $n=4a+b$, where $a\ge 0$ and $0\le b\le 3$. By Theorem \ref{FSthm}, we have $\ex(n,P_5)=6a+\frac{1}{2}b(b-1)=\frac{3}{2}n+\frac{1}{2}b^2-2b$, and the unique extremal graph is $aK_4\dcup K_b$. Now suppose that we have an $r$-colouring of $K_n$. Then the most frequent colour, say red, has at least $\lceil\frac{1}{r}{n\choose 2}\rceil$ edges. If $r\equiv 0$ (mod $4)$ and $n=3r+1$, or $r\equiv 1$ (mod $4)$ and $n=3r+2$, then $b=1$, so that $\ex(n,P_5)=\frac{3}{2}n-\frac{3}{2}$. We have
\[
\bigg\lceil\frac{1}{r}{n\choose 2}\bigg\rceil\ge \frac{3}{2}n>\ex(n,P_5)+1.
\]

If $r\equiv 2$ (mod $4)$ and $n=3r$, then $b=2$, so that $\ex(n,P_5)=\frac{3}{2}n-2$. We have
\[
\bigg\lceil\frac{1}{r}{n\choose 2}\bigg\rceil = \bigg\lceil\frac{3(n-1)}{2}\bigg\rceil= \frac{3(n-1)+1}{2}=\ex(n,P_5)+1.
\]

In all three cases, we have a red $P_5$.

If $r\equiv 3$ (mod $4)$ and $n=3r$, then $b=1$. We have
\[
\bigg\lceil\frac{1}{r}{n\choose 2}\bigg\rceil=\frac{3(n-1)}{2}=\ex(n,P_5).
\]
Suppose that there is no monochromatic copy of $P_5$. Then, every colour class has exactly $\lceil\frac{1}{r}{n\choose 2}\rceil=\textup{ex}(n,P_5)$ edges, and moreover, must induce the unique extremal graph $aK_4\dcup K_1$. Let $u$ be the vertex of $K_n$ which corresponds to $K_1$ in the red $aK_4\dcup K_1$. Then for every other colour, the number of edges incident to $u$ is three or zero, so that there are at most $3(r-1)$ edges at $u$. This contradicts that $u$ has degree $3r-1$ in $K_n$.
\end{proof}

Next, we prove the matching lower bound for $R_r(P_5)$, when $r\neq 4$.

\begin{lemma}\label{RrP5lem2}
For $r\ge 1$ with $r\neq 4$, we have
\[
R_r(P_5)\ge\left\{
\begin{array}{l@{\quad\quad}l}
\displaystyle 3r+1 & \textup{\emph{if} }r\equiv 0\textup{ (mod 4)\emph{,}}\\[1ex]
\displaystyle 3r+2 & \textup{\emph{if} }r\equiv 1\textup{ (mod 4)\emph{,}}\\[1ex]
\displaystyle 3r & \textup{\emph{if} }r\equiv 2,3\textup{ (mod 4)}.
\end{array}
\right.
\]
\end{lemma}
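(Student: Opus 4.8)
The plan is to prove each lower bound by exhibiting, for the relevant value $n\in\{3r-1,3r,3r+1\}$, an $r$-colouring of $K_n$ in which every colour class is $P_5$-free; this immediately gives $R_r(P_5)>n$. Throughout I will use the elementary fact that a graph is $P_5$-free whenever each of its connected components is either a clique on at most four vertices or a star, since in either case the longest path has at most four vertices. The four residue classes of $r$ modulo $4$ translate, via $v=n$, into the residues $v\equiv 4,0,5,8\pmod{12}$, and for three of them the colourings come directly from a resolvable design with block size $4$, using one parallel class per colour: a parallel class partitions the vertex set into blocks of size $4$, so the union of the corresponding edges is a disjoint union of (subgraphs of) $K_4$'s, which is $P_5$-free.

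Concretely, for $r\equiv 1\pmod 4$ I would take $n=3r+1\equiv 4\pmod{12}$ and apply Theorem \ref{RBIBDthm} to obtain a resolvable $B(4,n)$; its $\frac{n-1}{3}=r$ parallel classes are edge-disjoint and cover each edge exactly once, so assigning them distinct colours yields an $r$-colouring of $K_n$ with every class a disjoint union of $K_4$'s. For $r\equiv 0\pmod 4$ with $r\ne 4$ I would take $n=3r\equiv 0\pmod{12}$ (so $n\ne 12$) and apply Theorem \ref{RGDDthm}(a): its $\frac{n-3}{3}=r-1$ parallel classes give $r-1$ colours, and the leave graph $\frac{n}{3}K_3$, a disjoint union of triangles, is coloured with the final colour. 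For $r\equiv 3\pmod 4$ I would take $n=3r-1\equiv 8\pmod{12}$ and apply Theorem \ref{covthm}: its $\frac{n+1}{3}=r$ parallel classes cover every edge at least once, and colouring each edge by one parallel class whose blocks contain it makes each colour class a subgraph of a disjoint union of $K_4$'s.

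The case $r\equiv 2\pmod 4$ is not directly a design residue, since then $n=3r-1\equiv 5\pmod{12}$, so here I would build the colouring by hand from a smaller design. I would take a resolvable $B(4,n-1)$ on $n-1\equiv 4\pmod{12}$ vertices (Theorem \ref{RBIBDthm}), colour its $\frac{(n-1)-1}{3}=r-1$ parallel classes with $r-1$ colours, then add one apex vertex joined to all others and give all $n-1$ of its incident edges the final colour. That last class is a star, the others are disjoint unions of $K_4$'s, and we have used exactly $r$ colours on $K_n$, so no class contains a $P_5$.

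The main obstacle is the exceptional values in Theorem \ref{covthm}: for $v\equiv 8\pmod{12}$ the resolvable covering design is guaranteed only for $v\notin\{116,212\}$, so the $r\equiv 3$ construction above fails at $n\in\{116,212\}$ (that is, $r\in\{39,71\}$). I would handle these two values with the packing design of Theorem \ref{RGDDthm}(b) instead: since $116$ and $212$ are $\equiv 8\pmod{12}$, differ from $8$ and $20$, and do not lie in the exceptional list of that theorem, it supplies a resolvable packing with $\frac{n-2}{3}=r-1$ parallel classes whose leave graph is the perfect matching $\frac{n}{2}K_2$; colouring the $r-1$ classes and then the matching with the last colour again gives an $r$-colouring with every class $P_5$-free. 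It then remains only to verify the smallest cases ($r=1,2,3$) directly against the formulas, where the designs degenerate but the argument goes through, which is routine.
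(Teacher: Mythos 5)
Your proposal is correct, and it rests on the same design-theoretic machinery as the paper (Theorems \ref{RBIBDthm}, \ref{covthm} and \ref{RGDDthm}), with the $r\equiv 1$ and $r\equiv 3\pmod 4$ cases handled identically, including the correct identification of the exceptional values $v\in\{116,212\}$ (i.e.\ $r\in\{39,71\}$) and their repair via the resolvable packing of Theorem \ref{RGDDthm}(b). You deviate in two places. For $r\equiv 0\pmod 4$ you use the resolvable packing design of Theorem \ref{RGDDthm}(a) on $v=3r$ uniformly, colouring the leave graph $\frac{v}{3}K_3$ with the last colour; the paper instead uses the resolvable covering design of Theorem \ref{covthm} and must then separately patch the exceptional values $v\in\{108,132,156,204\}$ with the packing design. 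Your choice is arguably cleaner, since Theorem \ref{RGDDthm}(a) carries no exceptions, at the cost of reserving one colour for the leave graph rather than distributing all $r$ colours over parallel classes. For $r\equiv 2\pmod 4$ you build the colouring of $K_{3r-1}$ directly by adjoining an apex vertex, whose star gets the last colour, to a resolvable $B(4,3r-2)$; the paper instead deduces this case from $r\equiv 1\pmod 4$ via the monotonicity inequality $R_r(P_5)\ge R_{r-1}(P_5)+1$, whose proof is exactly your apex construction, so these are the same argument packaged differently. Your closing remark that $r=1,2,3$ need separate verification is in fact unnecessary: the cited theorems already cover $v=4$ and $v=8$, so your general constructions apply there without degeneration.
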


\begin{proof}
We first note that the case $r\equiv 2$ (mod $4)$ follows from the case $r\equiv 1$ (mod $4)$, since by (\ref{Rincr}), we have $R_r(P_5)\ge R_{r-1}(P_5)+1\ge 3(r-1)+2+1=3r$ for $r\equiv 2$ (mod $4)$.

Now, define
\[
g(r)=
\left\{
\begin{array}{l@{\quad\quad}l}
\displaystyle 3r\equiv 0\textup{ (mod }12) & \textup{if }r\equiv 0\textup{ (mod 4), }r\neq 4,\\[1ex]
\displaystyle 3r+1 \equiv 4\textup{ (mod }12) & \textup{if }r\equiv 1\textup{ (mod 4),}\\[1ex]
\displaystyle 3r-1 \equiv 8\textup{ (mod }12) & \textup{if }r\equiv 3\textup{ (mod 4)}.
\end{array}
\right.
\]

For $r\equiv 1$ (mod $4)$, by Theorem \ref{RBIBDthm}, there exists a resolvable BIBD $B(4,g(r))$. For $r\equiv 0,3$ (mod $4)$, by Theorem \ref{covthm}, there exists an optimal $(2,4,g(r))$-covering design which is resolvable, except for $g(r)\in\{108, 116, 132, 156, 204, 212\}$. In each case, we obtain an $r$-colouring of $K_{g(r)}$, where an edge $xy$ is given colour $i$ if $x$ and $y$ are contained in a block in the $i$th parallel class. The number of parallel classes is
\[
\left\{
\begin{array}{l@{\quad\quad}l}
\displaystyle \frac{3r}{3}=r & \textup{if }r\equiv 0\textup{ (mod 4), }r\neq 4,\\[2ex]
\displaystyle \frac{(3r+1)-1}{3}=r & \textup{if }r\equiv 1\textup{ (mod 4),}\\[2ex]
\displaystyle \frac{(3r-1)+1}{3}=r & \textup{if }r\equiv 3\textup{ (mod 4)}.
\end{array}
\right.
\]
We have an $r$-colouring of $K_{g(r)}$ which does not contain a monochromatic component with more than four vertices, and thus does not contain a monochromatic copy of $P_5$.

Now let $r\equiv 0$ (mod $4)$ and $g(r)=3r\in\{108, 132, 156, 204\}$. By Theorem \ref{RGDDthm}(a), there exists an optimal $(2, 4, g(r))$-packing design which is resolvable. The number of parallel classes is $\frac{3r-3}{3}=r-1$, and the leave graph is $L=rK_3$. Similarly, for $r\equiv 3$ (mod $4)$ and $g(r)=3r-1\in\{116, 212\}$, by Theorem \ref{RGDDthm}(b), there exists an optimal $(2, 4, g(r))$-packing design which is resolvable. Note that $116$ and $212$ are not in the list of 18 exceptional values in Theorem \ref{RGDDthm}(b). The number of parallel classes is $\frac{(3r-1)-2}{3}=r-1$, and the leave graph is $L=\frac{3r-1}{2}K_2$. In both cases, we obtain an $r$-colouring of $K_{g(r)}$, where an edge $xy$ is given colour $i$ if $x$ and $y$ are contained in a block in the $i$th parallel class, for $1\le i\le r-1$; and colour $r$ if $xy\in E(L)$. Then, all monochromatic components in this $r$-colouring are $K_4$, $K_3$ or $K_2$, so there is no monochromatic copy of $P_5$.

This means that for every $r\neq 4$, we have $R_r(P_5)\ge g(r)+1$.
\end{proof}

To complete the proof of Theorem \ref{RrP5thm}, it remains to compute $R_4(P_5)$.

\begin{lemma}\label{RrP5lem3}
$R_4(P_5)=11$.
\end{lemma}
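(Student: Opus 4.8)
We must establish $R_4(P_5)=11$, which splits into a lower bound $R_4(P_5)\ge 11$ and an upper bound $R_4(P_5)\le 11$. Note that this is the exceptional case: the general formula would predict $R_4(P_5)=3\cdot 4+1=13$ (since $4\equiv 0\pmod 4$), but the actual value is smaller because the design-theoretic construction underlying Lemma \ref{RrP5lem2} fails for $v=g(4)=12$ — precisely the excluded value in Theorems \ref{covthm} and \ref{RGDDthm}(a).

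For the lower bound, the plan is to exhibit a $4$-colouring of $K_{10}$ with no monochromatic $P_5$. Since no resolvable covering design on $12$ points is available, I would instead work directly on $10$ vertices and try to decompose $K_{10}$ (which has $\binom{10}{2}=45$ edges) into four colour classes, each of which is $P_5$-free. By Theorem \ref{FSthm}, a $P_5$-free graph on $10$ vertices has at most $\ex(10,P_5)=6\cdot 2+\frac{2\cdot 1}{2}=13$ edges (here $10=4\cdot 2+2$), attained uniquely by $2K_4\dcup K_2$. Since $4\cdot 13=52>45$, there is ample room; the task is to partition the $45$ edges into four parts each avoiding $P_5$, i.e.\ each inducing a disjoint union of cliques of order at most $4$ (equivalently, each monochromatic component has at most $4$ vertices). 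One natural approach is to seek a near-resolvable or ad hoc clique-partition of $K_{10}$: partition each colour class into triangles and/or $K_4$'s and possibly an edge. I expect this to reduce to finding a suitable combinatorial design on $10$ points or simply an explicit hand-constructed colouring, which I would verify directly.

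For the upper bound, I would assume a $4$-colouring of $K_{11}$ and derive a monochromatic $P_5$. Here $11=4\cdot 2+3$, so $\ex(11,P_5)=6\cdot 2+\frac{3\cdot 2}{2}=15$, with unique extremal graph $2K_4\dcup K_3$. Since $K_{11}$ has $\binom{11}{2}=55$ edges and $4\cdot 15=60$, a simple averaging argument does not immediately force a monochromatic $P_5$ (unlike the generic cases of Lemma \ref{RrP5lem1}). The strategy is therefore a structural one: suppose for contradiction that no colour contains a $P_5$, so every colour class is $P_5$-free and hence a disjoint union of cliques of order $\le 4$. I would argue that to cover all $55$ edges with four such graphs, each colour class must be close to extremal, i.e.\ close to $2K_4\dcup K_3$, and then track degrees at a well-chosen vertex (as in the final paragraph of Lemma \ref{RrP5lem1}) or count the total contribution more carefully to reach a contradiction. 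The main obstacle is that the slack of $60-55=5$ edges prevents a one-line counting contradiction, so the proof must exploit the rigidity of the extremal configuration: if each colour looks like $aK_4\dcup(\text{small clique})$, the vertices of $K_{11}$ must be simultaneously packed into $K_4$-blocks in four different ways, and I expect the contradiction to emerge from a parity or incidence constraint showing that $11$ vertices cannot be resolved into the required clique structure across all four colours at once.

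The harder of the two directions is the upper bound, precisely because the averaging bound is not tight and one must extract a contradiction from the combinatorial structure of four overlapping clique-partitions of $11$ points rather than from a mere edge count.
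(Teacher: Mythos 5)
There are genuine gaps in both directions. For the lower bound, you correctly identify the target (a $4$-colouring of $K_{10}$ in which every colour class is $P_5$-free), but you never produce one. The edge count $4\cdot\ex(10,P_5)=52>45$ only shows the construction is not ruled out by counting; it does not establish existence, and the existence is exactly the content of this half of the lemma. The paper exhibits an explicit decomposition of $K_{10}$ into four graphs, each a disjoint union of cliques of order at most $4$ (e.g.\ one class is $K_2\dcup K_4\dcup K_4$, the others are unions of two $K_4$'s or a $K_4$ with smaller cliques), and verifies by hand that their union is $K_{10}$. Without such an explicit object your lower bound is unproved.

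For the upper bound, your guiding intuition (each colour class must be near-extremal, and the rigidity of the extremal structure yields a contradiction) matches the paper's, but two things are wrong or missing. First, your assertion that a $P_5$-free graph is ``a disjoint union of cliques of order $\le 4$'' is false: stars, $P_4$, $C_4$ and $K_4^-$ are all $P_5$-free and are not disjoint unions of cliques. Second, and more importantly, you do not identify the actual mechanism of the contradiction. The paper's argument hinges on a structural claim: a $P_5$-free graph on $11$ vertices with $14$ edges must be $K_4\dcup K_4\dcup P_3$ or $K_4\dcup K_4^-\dcup K_3$ (proved by a case analysis on the components of a $P_5$-free graph and their edge counts). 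Combined with $\ex(11,P_5)=15$ and $\binom{11}{2}=55$, the colour classes must have sizes $(15,14,\ast,\ast)$ or $(14,14,14,13)$; in either case at least three classes contain a copy of $K_4$, yet the union of the two largest classes already packs $K_4\dcup K_4\dcup K_3$ into the $11$ vertices, so its complement is $K_4$-free --- contradiction. Your proposal gestures at ``a parity or incidence constraint'' but never supplies this key lemma or the $K_4$-incompatibility argument, so the upper bound is not established.
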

\begin{proof}
We first obtain the lower bound $R_4(P_5)\ge 11$. Let $x_1,x_2,y_1,\dots,y_4,z_1,\dots,z_4$ be $10$ vertices, and $G_1,\dots,G_4$ to be the graphs consisting of disjoint cliques on the following sets.\\[1ex]
\indent $G_1$: $\{x_1,x_2\}$, $\{y_1,\dots,y_4\}$, $\{z_1,\dots,z_4\}$;\\[1ex]
\indent $G_2$: $\{x_1,y_1,y_2,z_3\}$, $\{x_2,z_1,z_2,y_3\}$, $\{y_4,z_4\}$;\\[1ex]
\indent $G_3$: $\{x_1,z_1,z_2,y_4\}$, $\{x_2,y_1,y_2,z_4\}$, $\{z_3,y_3\}$;\\[1ex]
\indent $G_4$: $\{x_1,y_3,z_4\}$, $\{x_2,y_4,z_3\}$, $\{y_1,y_2,z_1,z_2\}$.\\[1ex]
\indent It is easy to verify that $G_1\cup G_2\cup G_3\cup G_4=K_{10}$. We obtain a $4$-colouring of $K_{10}$ where an edge $xy$ is given colour $i$ if $i$ satisfies $xy\in E(G_i)$. This $4$-colouring does not contain a monochromatic copy of $P_5$, and hence $R_4(P_5)\ge 11$.

Now we prove the matching upper bound $R_4(P_5)\le 11$. Let $K_4^-$ denote the graph obtained by deleting an edge from $K_4$.
\begin{claim}\label{R4P5clm}
%
If $G$ is a $P_5$-free graph with $11$ vertices and $14$ edges, then $G=K_4\dcup K_4\dcup P_3$ or  $G=K_4\dcup K_4^-\dcup K_3$.
\end{claim}
\begin{proof}
Let $F$ be a component of $G$ with $s$ vertices. Since $G$ is $P_5$-free, if $F$ contains a cycle, then the longest cycle of $F$ has length $3$ or $4$. If the former, then $F$ is a $C_3$ with possibly some pendent edges attached to one vertex, and $e(F)=s$. If the latter, then $F=C_4,K_4^-$ or $K_4$, and $e(F)=s,s+1$ or $s+2$ respectively. Otherwise, $F$ is a $P_5$-free tree, and $e(F)=s-1$. Since $|V(G)|=11$, at most two components of $G$ can be $K_4^-$ or $K_4$. Also, since $e(G)=|V(G)|+3$, this means that exactly two components are either both $K_4$, or one is $K_4^-$ and the other is $K_4$. We can then easily see that $G=K_4\dcup K_4\dcup P_3$ or  $G=K_4\dcup K_4^-\dcup K_3$.
\end{proof}

Suppose that we have a $4$-colouring of $K_{11}$ with vertex set $V$, which does not contain a monochromatic copy of $P_5$. Let $G_1,\dots, G_4$ be the four graphs on $V$ induced by the colour classes, and assume that $e(G_1)\ge\cdots\ge e(G_4)$. By Theorem \ref{FSthm}, we have ex$(11,P_5)=15$, with the unique extremal graph $K_4\dcup K_4\dcup K_3$. Hence, $e(G_1)\le 15$. If $e(G_1)=15$, then $G_1=K_4\dcup K_4\dcup K_3$, and $e(G_2)\ge \big\lceil\frac{e(K_{11})-15}{3}\big\rceil=\big\lceil\frac{55-15}{3}\big\rceil=14$. By Claim \ref{R4P5clm}, $G_2$ contains a copy of $K_4$, which is a contradiction since the complement of $G_1$ does not contain a copy of $K_4$. Otherwise, we have $e(G_1)=e(G_2)=e(G_3)=14$ and $e(G_4)=13$. Again by Claim \ref{R4P5clm}, we have $G_1=K_4\dcup K_4\dcup P_3$ or $K_4\dcup K_4^-\dcup K_3$, and $G_2$ and $G_3$ both contain a copy of $K_4$. We see that $G_1\cup G_2$ must contain a copy of $K_4\dcup K_4\dcup K_3$, so that the complement of $G_1\cup G_2$ does not contain a copy of $K_4$. This contradicts that $G_3$ contains a copy of $K_4$.

This completes the proof of Lemma \ref{RrP5lem3}.
\end{proof}

By Lemmas \ref{RrP5lem1}, \ref{RrP5lem2} and \ref{RrP5lem3}, Theorem \ref{RrP5thm} is proved.


%


\section*{Acknowledgements}
Henry Liu is partially supported by the Startup Fund of One Hundred Talent Program of\, SYSU,\, and\, National\, Natural\, Science\, Foundation\, of\, China\, (No.~11931002).\, Bojan Mohar is partially supported by the NSERC Discovery Grant R611450 (Canada), and by the Research Project J1-2452 of ARRS (Slovenia). Yongtang Shi is partially supported by National Natural Science Foundation of China (Nos.~11771221 and 11922112), and Natural Science Foundation of Tianjin (Nos.~20JCJQJC00090 and
20JCZDJC00840).


\begin{thebibliography}{99}

\bibitem{AABBG07} R.J.R.~Abel, A.~Assaf, F.E.~Bennett, I.~Bluskov, M.~Greig, Pair covering designs with block size 5, Discrete Math.~307 (2007) 1776--1791.



\bibitem{Bie86} J.~Bierbrauer, Ramsey numbers for the path with three edges, European J.~Combin.~7 (1986) 205--206.

\bibitem{Bol98} B.~Bollob\'as, Modern Graph Theory, Springer-Verlag, New York, 1998.



\bibitem{FS75} R.J.~Faudree, R.H.~Schelp, Path Ramsey numbers in multicolorings, J.~Combin.~Theory Ser.~B 19 (1975) 150--160.


\bibitem{GLLS05} G.~Ge,\, C.W.H.~Lam,\, A.C.H.~Ling,\, H.~Shen,\, Resolvable\, maximum\, packing\, with quadruples, Des.~Codes Cryptogr.~35 (2005) 287--302.

\bibitem{GG67} L.~Gerencs\'er, A.~Gy\'arf\'as, On Ramsey-type problems, Ann.~Univ.~Sci.~Budapest.~E\"otv\"os Sect.~Math.~10 (1967) 167--170.


\bibitem{GRSS07}  A.~Gy\'arf\'as, M.~Ruszink\'o, G.N.~S\'ark\"ozy, E.~Szemer\'edi, Three-color Ramsey numbers for paths, Combinatorica 27 (2007) 35--69. Corrigendum in 28 (2008) 499--502.

\bibitem{HRW72} H.~Hanani, D.K.~Ray-Chaudhuri, R.M.~Wilson, On resolvable designs, Discrete Math.~3 (1972) 343--357.

\bibitem{Irv74} R.W.~Irving, Generalised Ramsey numbers for small graphs, Discrete Math.~9 (1974) 251--264.


\bibitem{LMR98} E.R.~Lamken, W.H.~Mills, R.S.~Rees, Resolvable minimum coverings with quadruples, J.~Combin.~Des.~6 (1998) 431--451.



\bibitem{Lin83} B.~Lindstr\"om, Undecided Ramsey-numbers for paths, Discrete Math.~43 (1983) 111--112.


\bibitem{Rad21} S.P.~Radziszowski, Small Ramsey numbers, Electron.~J. Combin., \#DS01, version of 15 January, 2021.

\bibitem{Ram30} F.P.~Ramsey, On a problem of formal logic, Proc.~Lond.~Math.~Soc.~30 (1930) 264--286.

\bibitem{RR10} C.~Reid, A.~Rosa, Steiner systems S$(2, 4, v)$ - a survey, Electron.~J.~Combin.~(2010) \#DS18.




\end{thebibliography}
\end{document}